\documentclass[12pt,reqno]{amsart}
\usepackage{amsmath,amsfonts,amssymb,amsthm,amscd,latexsym}
%\usepackage[T1]{fontenc}
%\usepackage[utf8]{inputenc}

%\usepackage{tikz}
%\usetikzlibrary{arrows}
\usepackage{color}

\tolerance=5000 \topmargin -1cm \oddsidemargin=0,5cm
\evensidemargin=-0,2cm \textwidth 15.6cm \textheight 24cm
\linespread{1.0}
\newtheorem{theorem}{Theorem}[section]
\newtheorem{lemma}[theorem]{Lemma}

\newtheorem{corollary}[theorem]{Corollary}

\newtheorem{remark}[theorem]{Remark}

\newtheorem{ques}[theorem]{Question}

\newcommand{\cM}{{\mathcal M}}
\newcommand{\cN}{{\mathcal N}}
\newcommand{\cA}{{\mathcal A}}
\newcommand{\cB}{{\mathcal B}}

\newcommand{\cL}{{\mathcal L}}
\begin{document}

\date{\today}

\title[Ring isomorphisms of $\ast$-subalgebras of Murray--von Neumann factors]{Ring isomorphisms
of $\ast$-subalgebras of Murray--von Neumann factors}

\author[Sh. A. Ayupov]{Shavkat Ayupov}
\address{V.I.Romanovskiy Institute of Mathematics\\
  Uzbekistan Academy of Sciences\\ 81,  Mirzo Ulughbek street, 100170  \\
  Tashkent,   Uzbekistan}
\address{National University of Uzbekistan \\
4, University street, 100174, Tashkent, Uzbekistan}
\email{\textcolor[rgb]{0.00,0.00,0.84}{shavkat.ayupov@mathinst.uz}}

\author[K. Kudaybergenov]{Karimbergen Kudaybergenov}
\address{V.I.Romanovskiy Institute of Mathematics\\
  Uzbekistan Academy of Sciences \\ 81, Mirzo Ulughbek street, 100170  \\
  Tashkent,   Uzbekistan}
  \address{Department of Mathematics\\
 Karakalpak State University\\
 1, Ch. Abdirov, 230112,  Nukus, Uzbekistan}
\email{\textcolor[rgb]{0.00,0.00,0.84}{karim2006@mail.ru}}

\newcommand{\M}{\mathcal{M}}
\newcommand{\sm}{S(\mathcal{M})}

\begin{abstract}
The present paper is devoted to study of ring isomorphisms
of $\ast$-subalgebras of Murray--von Neumann factors.
Let $\cM,$  $\cN$ be von Neumann factors  of type II$_1,$
and let $S(\cM),$  $S(\cN)$ be the  $\ast$-algebras of all measurable operators affiliated with $\cM$ and $ \cN,$ respectively. Suppose that  $\cA\subset S(\cM),$ $\cB\subset S(\cN)$ are their $\ast$-subalgebras such that
$\cM\subset \cA,$ $\cN\subset \cB.$
We prove  that  for every ring isomorphism $\Phi: \cA \to  \cB$ there exist a positive invertible element
$a \in \cB$ with $a^{-1}\in \cB$ and a real
$\ast$-isomorphism $\Psi: \cM \to  \cN$ (which extends to a real
$\ast$-isomorphism from $\cA$ onto $\cB$) such that
$\Phi(x) = a\Psi(x)a^{-1}$ for
all $x \in  \cA.$ In particular, $\Phi$ is real-linear and continuous in the measure topology.
In particular, noncommutative Arens algebras and noncommutative $\cL_{log}$-algebras
associated with  von Neumann factors of type II$_1$ satisfy the above conditions and the main Theorem implies the automatic continuity of their ring isomorphisms in the corresponding metrics.
We also present an example of a $\ast$-subalgebra  in $S(\cM),$  which shows that the condition $\cM\subset \cA$ is essential in the above mentioned result.
\end{abstract}

\subjclass[2010]{Primary 46L10, Secondary, 46L51, 16E50, 47B49}
\keywords{von Neumann algebra, algebra of measurable operators, ring isomorphisms, real algebra isomorphism, real $\ast$-isomorphism}

\maketitle

\bigskip

\section{Introduction}

In 1930's, motivated by the
geometry of lattice of the projections  of type II$_1$ factors, von Neumann built the
theory on the correspondence between complemented orthomodular lattices
and regular rings. Let us recall one of his achievements \cite[Part II, Theorem 4.2]{Neu60},
applied to
the case of $\ast$-regular rings.
Let  $\mathfrak{R},$ $\mathfrak{R}'$ be  $\ast$-regular rings such that their  lattices of projections
$L_{\mathfrak{R}}$ and $L_{\mathfrak{R}'}$ are lattice-isomorphic. If
$\mathfrak{R}$ has order $n\ge 3$ (which means that it contains a ring of matrices of order $n$),
then there exists a ring isomorphism of  $\mathfrak{R}$ and $\mathfrak{R}'$ which generates given lattice isomorphism between
$L_{\mathfrak{R}}$ and $L_{\mathfrak{R}'}.$
One of  important classes of $\ast$-regular rings are
 the $\ast$-algebra  of operators affiliated with a finite von Neumann algebra.
Let $\cM$ be a von Neumann algebra and let $S(\cM)$ (respectively, $LS(\cM)$) be the $\ast$-algebra of all measurable (respectively, locally measurable) operators affiliated with  $\cM.$ Note that if $\cM$ is a finite von Neumann algebra then every operator affiliated with $\cM$ is automatically measurable and hence the $\ast$-algebras $S(\cM)$ and $LS(\cM)$ coincide. Applied to
the case of arbitrary type II$_1$ von Neumann algebras, the above von Neumann isomorphism theorem is formulated  as follows:
If  $\cM$ and $\cN$ are von Neumann algebras of type II$_1$ and
$\Phi:P(\cM) \to  P(\cN)$ is a lattice isomorphism then there exists a  unique ring isomorphism $\Psi :S(\cM) \to  S(\cN)$
such that $\Phi(l(x)) = l(\Psi(x))$ for any $x \in  S(\cM),$ where $l(a)$ is the left support of the element $a.$

Note that in the case of commutative regular rings the picture is completely different.
Let us recall a problem of isomorphisms for an important class of commutative regular rings with an atomic Boolean algebra of idempotents,
namely,  so-called Tychonoff semifields.
Given an arbitrary set  $\Delta$, a Tychonoff semifield $\mathbb{R}^\Delta$
 is defined as the product of $|\Delta|$ copies of the real field, equipped with the pointwise algebraic operations, natural partial order and the Tychonoff's topology. These operations make $\mathbb{R}^\Delta$  a topological regular  ring. The set of all idempotents of the semifield $\mathbb{R}^\Delta$ with the induced topology and order is topologically isomorphic to   $\{0,1\}^\Delta.$
For $g\in \Delta$ denote by $\mathbf{1}_g$ an atom from $\{0,1\}^\Delta$ defined as $\mathbf{1}_g(g)=1$
and $\mathbf{1}_g(g')=0$ for $g\neq g'$ ($g'\in \Delta$) and  $\mathbf{1}_\Delta$ is identity of $\mathbb{R}^\Delta.$

The following two questions are equivalent (see \cite{AB1970}, \cite{Ayupov77}):
\begin{itemize}
\item[(a)] Does there exist an algebraic homomorphism $\psi:\mathbb{R}^\Delta\to \mathbb{R}$
satisfying the condition $\psi(\mathbf{1}_g)=0$ for all $g\in \Delta,$ such that $\psi(\mathbf{1}_\Delta)=1?$
\item[(b)] Does there exist a non trivial two-valued countably additive measure $\mu:\{0,1\}^\Delta\to \mathbb{R}$
satisfying the condition $\mu(\mathbf{1}_g)=0$ for all $g\in \Delta?$
\end{itemize}
The second question is the famous Ulam Problem \cite{Ulam} which is connected with the properties of cardinal $|\Delta|.$

Returning to the noncommutative case recall that in the recent paper \cite{MMori2020} M. Mori  characterized lattice isomorphisms between projection lattices  $P(\cM)$ and $P(\cN)$  of arbitrary von Neumann algebras $\cM$   and $\cN$, respectively,  by means of ring isomorphisms between the algebras $LS(\cM)$ and $LS(\cN)$.  In this connection he investigated the following problem.

\begin{ques}\label{ques}
Let $\cM, \cN$ be von Neumann algebras. What is the general form of ring
isomorphisms from $LS(\cM)$ onto $LS(\cN)?$
\end{ques}

In \cite[Theorem B]{MMori2020} Mori himself gave  an answer to the above Question~\ref{ques} in the case of von Neumann algebras of type I$_\infty$ and III. Namely,
any ring isomorphism $\Phi$ from  $LS(\cM)$ onto $LS(\cN)$ has the form
$$
\Phi(x)=y\Psi(x)y^{-1},\, x\in LS(\cM),
$$
where $\Psi$ is a real $\ast$-isomorphism from $LS(\cM)$ onto $LS(\cN)$ and
$y\in LS(\cN)$ is an invertible element. Note that in the case  where $\Phi$ is an  algebraic isomorphism  of type I$_\infty$  von Neumann algebras, the above
presentation was obtained in \cite{AAKD11}.

In \cite{MMori2020} the author  conjectured that the  representation of ring isomorphisms, mentioned above for type  I$_\infty$ and III cases holds also for type II von Neumann algebras.
In \cite{AK2020} we have answered affirmatively to the above Question~\ref{ques} of Mori in the case of von Neumann algebras of type
II$_1.$  Namely, it was shown \cite[Theorems 1.3 and 1.4]{AK2020} that
if  $\cM,$  $\cN$ are  von Neumann algebras of type II$_1$   any
ring isomorphism $\Phi: S(\cM) \to  S(\cN)$ is continuous in the measure topology and  there exist an invertible element
$a \in S(\cN)$ and a real
$\ast$-isomorphism $\Psi: \cM \to  \cN$ (which extends to a real
$\ast$-isomorphism from $S(\cM)$ onto $S(\cN)$) such that
$\Phi(x) = a\Psi(x)a^{-1}$ for
all $x \in  S(\cM).$ As a corollary we also obtained that
for  von Neumann algebras $\cM$ and $\cN$   of type II$_1$ the projection lattices $P(\cM)$
and $P(\cN)$ are lattice isomorphic, if and only if the von Neumann algebras $\cM$ and $\cN$ are
Jordan $\ast$-isomorphic.
The present paper can be considered as an extention of the results from \cite{AK2020}.

In Section~2 we give preliminaries on Murray-von Neumann algebras and its special subalgebras -- so-called noncommutative Arens algebras and noncommutative $\cL_{log}$-algebras.

 The following Theorem which is  the main result of the present paper we shall prove in Section~3.

\begin{theorem}\label{ringisomorphism}
Let $\cM,$  $\cN$ be von Neumann factors  of type II$_1$ and let $\cA\subset S(\cM),$ $\cB\subset S(\cN)$ be $\ast$-subalgebras such that
$\cM\subset \cA,$ $\cN\subset \cB.$ Suppose that
$\Phi: \cA \to  \cB$ is a ring isomorphism.
Then there exist a positive invertible element
$a \in \cB$ with $a^{-1}\in \cB$ and a real
$\ast$-isomorphism $\Psi: \cM \to  \cN$ (which extends to a real
$\ast$-isomorphism from $\cA$ onto $\cB$) such that
$\Phi(x) = a\Psi(x)a^{-1}$ for
all $x \in  \cA.$ In particular, $\Phi$ is  real-linear and continuous in the measure topology.
\end{theorem}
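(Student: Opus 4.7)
The plan is to reduce to the authors' previous result \cite[Theorems~1.3 and~1.4]{AK2020}, which handles the special case $\cA = S(\cM)$ and $\cB = S(\cN)$. The reduction proceeds in three stages: first, extend $\Phi$ to a ring isomorphism $\tilde\Phi : S(\cM) \to S(\cN)$; second, apply \cite{AK2020} to $\tilde\Phi$ to obtain the representation $\tilde\Phi = \mathrm{Ad}(a_0) \circ \Psi$ for an invertible $a_0 \in S(\cN)$ and a real $\ast$-isomorphism $\Psi$; and third, show that, after absorbing the phase of $a_0$, the resulting positive element $a$ and its inverse actually lie in $\cB$.

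For the extension, the key observation is that invertibility in $S(\cM)$ of an element $y \in \cA$ is detectable at the level of $\cA$: the left and right kernel projections of $y$ (computed inside $\cM$) lie in $\cM \subset \cA$, so $y$ is invertible in $S(\cM)$ if and only if $y$ is neither a left nor a right zero-divisor in $\cA$. The same equivalence holds in $\cB$ via $\cN \subset \cB$, hence $\Phi$ sends elements of $\cA$ invertible in $S(\cM)$ to elements of $\cB$ invertible in $S(\cN)$. Every $x \in S(\cM)$ admits a right-fraction representation $x = y_1 y_2^{-1}$ with $y_1, y_2 \in \cM$ and $y_2$ invertible in $S(\cM)$ (for instance, via the polar decomposition $x = u|x|$, take $y_2 = (1+|x|)^{-1}$ and $y_1 = x y_2 = u|x|(1+|x|)^{-1}$, both in $\cM$). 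Set
\[
\tilde\Phi(x) := \Phi(y_1)\,\Phi(y_2)^{-1} \in S(\cN).
\]
Standard Ore-type manipulations (using that $\cM$ plays the role of a common denominator algebra inside $S(\cM)$ and that $\Phi$ respects regular elements) show that this is independent of the chosen fraction, that $\tilde\Phi$ is a ring homomorphism, and that it agrees with $\Phi$ on $\cA$. Performing the symmetric construction on $\Phi^{-1}$ yields an inverse, so $\tilde\Phi$ is a ring isomorphism. Invoking \cite[Theorems~1.3 and~1.4]{AK2020} now produces an invertible $a_0 \in S(\cN)$ and a real $\ast$-isomorphism $\Psi : S(\cM) \to S(\cN)$ with $\Psi(\cM) = \cN$ satisfying $\tilde\Phi = \mathrm{Ad}(a_0) \circ \Psi$. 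Using the left polar decomposition $a_0 = |a_0^*|\,u$ with unitary $u \in \cN$, and replacing $\Psi$ by $\mathrm{Ad}(u) \circ \Psi$, we may assume $a := |a_0^*|$ is positive and invertible in $S(\cN)$, so that $\Phi(x) = a\Psi(x)a^{-1}$ on $\cA$.

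The main difficulty, and the genuinely new ingredient beyond \cite{AK2020}, is to show that this $a$ lies in $\cB$ together with $a^{-1}$. From $\Phi(\cA) = \cB$, $\cM \subset \cA$, and $\Psi(\cM) = \cN$ we obtain $a\cN a^{-1} = \Phi(\cM) \subset \cB$, while already $\cN \subset \cB$; thus $\cB$ contains two copies of $\cN$ conjugate through the positive element $a$. The strategy is to reconstruct $a$ --- unique up to positive scalar, since the commutant of $\cN$ inside $S(\cN)$ is trivial --- via an algebraic construction that takes place inside $\cB$. Concretely, one fixes a system of $n \times n$ matrix units $\{e_{ij}\} \subset \cM$ (possible since $\cM$ is of type II$_1$); the idempotents $e'_{ij} := \Phi(e_{ij}) \in \cB$ are similar in $S(\cN)$ to the genuine matrix units $f_{ij} := \Psi(e_{ij}) \in \cN \subset \cB$, and the standard idempotent-to-projection similarity built from $e'_{ij}$ and $f_{ij}$ using positive square roots within $\cN$ produces an implementing element of $\cB$ which, up to a positive scalar, must coincide with $a$. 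Once $a, a^{-1} \in \cB$ is established, the identity $\Psi = \mathrm{Ad}(a^{-1}) \circ \Phi$ on $\cA$ gives $\Psi(\cA) \subset \cB$; the symmetric argument provides the reverse inclusion, so $\Psi$ restricts to the desired real $\ast$-isomorphism $\cA \to \cB$. Real-linearity of $\Phi$ and continuity in the measure topology then follow automatically from the corresponding properties of $\Psi$ and of conjugation by the invertible element $a \in S(\cN)$.
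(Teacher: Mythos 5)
Your overall architecture matches the paper's: extend $\Phi$ to a ring isomorphism $\widetilde\Phi$ of $S(\cM)$ onto $S(\cN)$, invoke \cite[Theorems 1.3 and 1.4]{AK2020}, normalise $a$ to be positive, and then show $a,a^{-1}\in\cB$. Your extension step goes through Ore fractions rather than the paper's route (the paper proves that $\Phi$ is continuous for the rank metric $\rho(x,y)=\tau(l(x-y))$, shows that $\cM$, hence $\cA$, is $\rho$-dense in $S(\cM)$, and extends by $\rho$-completeness of $S(\cN)$); your detection of invertibility in $S(\cM)$ via zero-divisors in $\cA$ is correct and genuinely uses $\cM\subset\cA$, and the fraction calculus can in principle be justified because $S(\cM)$ is the classical ring of quotients of the finite algebra $\cM$ --- but ``standard Ore-type manipulations'' conceals exactly that nontrivial fact, which would have to be cited or proved.

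The genuine gap is in the step you yourself flag as the new ingredient: membership of $a$ and $a^{-1}$ in $\cB$. An invertible $b\in S(\cN)$ satisfies $bf_{ij}b^{-1}=e'_{ij}$ for a fixed finite system of $n\times n$ matrix units if and only if $a^{-1}b$ lies in the relative commutant of the copy of $M_n(\mathbb{C})$ generated by $\{f_{ij}\}$, and in a II$_1$ factor that relative commutant is again a II$_1$ factor; so the implementing element is very far from unique up to a positive scalar, and there is no reason the one produced by the idempotent-to-projection similarity should coincide with $a$. Uniqueness up to scalar holds only for elements conjugating \emph{all} of $\cN$, which no finite (or even countable, without a convergence argument) construction inside $\cB$ sees. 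The paper's actual argument is quite different and is the heart of the proof: after making $a$ positive, choose a spectral projection $e_\lambda=e_{(0,\lambda]}(a)$ with $\tau_\cN(\mathbf{1}-e_\lambda)\le\frac12$, so that $ae_\lambda\le\lambda e_\lambda$ is bounded and lies in $\cN\subset\cB$; by comparison in the II$_1$ factor pick a partial isometry $u\in\cN$ with $uu^*=\mathbf{1}-e_\lambda$ and $u^*u\le e_\lambda$, set $w=\Psi^{-1}(u)\in\cM\subset\cA$, and compute $\Phi(w)\,ae_\lambda u^*=a(\mathbf{1}-e_\lambda)$; since $\Phi(w)$, $ae_\lambda$ and $u^*$ all lie in $\cB$, the unbounded part $a(\mathbf{1}-e_\lambda)$ lies in $\cB$, whence $a\in\cB$, and similarly $a^{-1}\in\cB$. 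Your concluding steps ($\Psi(\cA)=\cB$, real-linearity and measure continuity) are fine once this is in place, but the matrix-unit sketch must be replaced by an argument of the above kind.
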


In Section~4 we show that there is a $\ast$-regular subalgebra
of algebra of all measurable operators with respect to a
hyperfinite factor of type II$_1$ which admits an algebra automorphism, discontinuous in the measure topology.
In particular, since  Theorem~\ref{ringisomorphism} gives us automatic continuity in the measure topology of ring isomorphisms,
the mentioned example   shows that the condition $\cM\subset \cA$ is essential in Theorem~\ref{ringisomorphism}.

\section{Preliminaries}

For
$\ast$-algebras $\cA$  and
$\cB,$ a (not necessarily linear) bijection $\Phi: \cA \to  \cB$ is called
\begin{itemize}
%\item  a semigroup isomorphism if it is multiplicative;
\item a ring isomorphism if it is additive and multiplicative;
\item a real algebra isomorphism if it is a real-linear ring isomorphism;
\item an algebra isomorphism if it is a complex-linear ring isomorphism;
\item a real
$\ast$-isomorphism if it is a real algebra isomorphism and satisfies $\Phi(x^\ast) =
\Phi(x)^\ast$ for all $x \in  \cA;$
\item a
$\ast$-isomorphism if it is a complex-linear real $\ast$-isomorphism. %, and
%\item a conjugate-linear
%$\ast$-isomorphism
%if it is a conjugate-linear real
%$\ast$-isomorphism.
\end{itemize}

\subsection{Murray-von Neumann algebra}

Let $H$  be a Hilbert space,  $B(H)$ be the $\ast$-algebra of all bounded linear operators
on $H$ and let $\cM$ be a von Neumann algebra in $B(H)$.

Denote by $P(\mathcal{M})$ the set of all projections in $\mathcal{M}.$ Recall that two projections $e, f \in  P(\mathcal{M})$ are called \textit{equivalent}  (denoted as $e\sim f$) if there exists an element
$u \in \mathcal{M}$ such that $u^\ast  u = e$ and $u u^\ast  = f.$
For projections $e, f \in  \mathcal{M}$
notation $e \precsim  f$ means that there exists a projection $q \in  \mathcal{M}$ such that
$e\sim q \leq f.$ A projection $p \in \mathcal{M}$ is said to be \textit{finite}, if it is not equivalent to its proper sub-projection, i.e.
the conditions $q \leq  p$ and $q\sim p$ imply that $q = p$ (for details information concerning von Neumann algebras see \cite{KRII, Sakai_book}).

A densely defined closed linear operator $x : \textrm{dom}(x) \to  H$
(here the domain $\textrm{dom}(x)$ of $x$ is a dense linear subspace in $H$) is said to be \textit{affiliated} with $\mathcal{M}$
if $yx \subset  xy$ for all $y$ from the commutant $\mathcal{M}'$  of the algebra $\mathcal{M}.$

A linear operator $x$ affiliated with $\mathcal{M}$ is called \textit{measurable} with respect to $\mathcal{M}$ if
$e_{(\lambda,\infty)}(|x|)$ is a finite projection for some $\lambda>0.$ Here
$e_{(\lambda,\infty)}(|x|)$ is the  spectral projection of $|x|$ corresponding to the interval $(\lambda, +\infty).$
We denote the set of all measurable operators by $S(\mathcal{M}).$

Let $x, y \in  S(\mathcal{M}).$ It is well known that $x+y$ and
$xy$ are densely-defined and preclosed
operators. Moreover, the (closures of) operators $x + y, xy$ and $x^\ast$  are also in $S(\mathcal{M}).$
When
equipped with these operations, $S(\mathcal{M})$ becomes a unital $\ast$-algebra over $\mathbb{C}$  (see \cite{MC,Segal}). It
is clear that $\mathcal{M}$  is a $\ast$-subalgebra of $S(\mathcal{M}).$
In the case of finite von Neumann algebra $\cM$, all operators affiliated with $\cM$ are measurable and the algebra $S(\cM)$ is referred to as the \emph{Murray-von Neumann algebra} associated with $\cM$ (see \cite{KL}).

Let $\cM$ be a von Neumann algebra with a faithful normal finite trace $\tau.$
Consider the topology  $t_\tau$ of convergence in measure or \textit{measure topology} \cite{Nel}
on $S(\mathcal{M}),$ which is defined by
the following neighborhoods of zero:
$$
N(\varepsilon, \delta)=\{x\in S(\mathcal{M}): \exists \, e\in P(\mathcal{M}), \, \tau(\mathbf{1}-e)\leq\delta, \, xe\in
\mathcal{M}, \, \|xe\|_\cM\leq\varepsilon\},
$$
where $\varepsilon, \delta$
are positive numbers. The pair $(S(\cM), t_\tau)$ is a complete topological $\ast$-algebra.

We define the so-called rank  metric $\rho$ on $S(\mathcal{M})$  by setting
$$
\rho(x, y)=\tau( r(x-y))=\tau(l(x-y)),\,\, x, y\in S(\cM).
$$
In fact, the rank-metric $\rho$ was firstly introduced in a general case of regular rings in \cite{Neu37}, where it was shown it is a metric.
By \cite[Proposition 2.1]{Ciach}, the algebra $S(\mathcal{M})$ equipped with the metric $\rho$ is a complete topological $*$-ring.

Let $\cM$ be a finite von Neumann algebra. A $\ast$-subalgebra  $\mathcal{A}$ of $S(\mathcal{M})$ is said to be
\emph{regular}, if it is a regular ring in the sense of von Neumann, i.e., if for every
$a\in\mathcal{A}$ there exists an element  $b\in\mathcal{A}$ such that
$aba=a.$

Given $a\in S(\cM)$  let  $a=v|a|$ be the polar decomposition of $a.$
Then $l(a) =v v^\ast$ and  $r(a)=v^\ast v$ are left and right supports of the element  $a$, respectively.
The projection  $s(a)=l(a)\vee r(a)$ is the support of the element $a$. It is clear that $r(a)=s(|a|)$ and  $l(a)=s(|a^*|)$.
There is a unique element $i(a)$ in
$S(\mathcal{M})$ such that $ai(a)=l(a),\ i(a)a=r(a),\ ai(a)a=a,$
$i(a)l(a)=i(a)$ and  $r(a)i(a)=i(a).$
The element  $i(a)$ is called the \emph{partial inverse} of the element $a.$
Therefore  $S(\mathcal{M})$ is a regular $*$-algebra  (see \cite{Berber}, \cite{Saito}).

Let $e\in S(\cM)$ be an idempotent, i.e., $e^2=e.$  Recall the following properties of the left projection \cite{AK2020}:
\begin{equation}\label{leftpr}
l(e)e=e,\,\, el(e)=l(e).
\end{equation}
It is clear that the left support $l(e)$ of the idempotent $e$ is uniquely determined by the above two equalities.

\subsection{Noncommutative Arens algebras and noncommutative $\cL_{log}$-algebras}

\

In this subsection we present two classes of $\ast$-subalgebras  in $S(\cM)$ which satisfy the conditions of Theorem~\ref{ringisomorphism}.

Let $\cM$ be a von Neumann algebra with a faithful normal semifinite trace $\tau.$
Given $p \ge 1$ denote by $L_p(\cM,\tau)$ the set of all elements $x$ from $S(\cM)$ such that
$
\tau(|x|^p)<+\infty.$
It is well-known that $L_p(\cM,\tau)$ is a
Banach space with respect to the norm
$$
||x||_p=\left(\tau\left(|x|^p\right)\right)^{1/p},\, x \in  L_p(\cM,\tau).
$$
The intersection
$$
L^\omega(\cM,\tau) = \bigcap\limits_{p\ge 1}L_p(\cM,\tau).
$$
is a $\ast$-subalgebra in $S(\cM)$  \cite{Inoue}.
The algebra  $L^\omega(\cM,\tau)$ is called a noncommutative Arens algebra and it
is a locally convex complete metrizable
$\ast$-algebra with respect to the topology  generated by the family of norms $\{||\cdot||_p\}_{p\ge 1}$  (see(\cite{ARZ, AAK07}).
Note that  in the commutative (functional space) case the algebra $L^\omega [0, 1]$  was introduced by R. Arens in \cite{Arens}.

Let $\cL_{log}(\cM,\tau)$ be the set of all elements $x$ from $S(\cM)$ which satisfy
$$
||x||_{log}=\tau\left(\log(\mathbf{1} + |x|)\right) <+\infty.
$$
It is known that \cite[Theorem 4.9]{DSZ16} the pair $\left(\cL_{log}(\cM,\tau), ||\cdot||_{log}\right)$ is a topological $\ast$-algebra with respect to a complete metric space topology.

Note  that by \cite[Proposition 4.7]{DSZ16}, it follows that the Arens algebra
$L^\omega(\cM, \tau)$ is $\ast$-subalgebra of
$\cL_{log}(\cM, \tau).$
It clear that if  $\tau$ is  finite,  then $\cM$ is a $\ast$-subalgebra in both algebras $L^\omega(\cM, \tau)$ and
$\cL_{log}(\cM, \tau).$  It should be noted that  $\cL_{log}$-algebras were considered as certain invariant
subspaces in order to obtain upper-triangular-type decompositions of unbounded operators (see \cite{DSZ2017}).

\section{Proof of the main result}

Let   $\mathcal{M}$ and $\cN$ be  arbitrary type II$_1$ von Neumann factors with  faithful normal normalised
traces $\tau_{\cM}$ and
$\tau_{\cN},$ respectively and let $\Phi:\cA \to \cB$ be a ring isomorphism.

The following is a well-known result which is crucial in our construction.
For convenience, we include the full proof.

By  $\rho_\cM$ we denote  the rank-metric on $S(\cM).$

\begin{lemma}\label{dense}
$\cM$ is $\rho_{\cM}$-dense in $S(\cM).$
\end{lemma}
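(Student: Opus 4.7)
The plan is to approximate an arbitrary $x\in S(\cM)$ by the bounded truncations coming from the spectral decomposition of $|x|$, and then verify that the right-support of the error is a small-trace tail projection.

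First, I would take the polar decomposition $x=v|x|$ with $v\in\cM$ a partial isometry and $|x|\in S(\cM)$ positive self-adjoint, and use the spectral resolution $|x|=\int_0^\infty \lambda\,de_\lambda$ of $|x|$. Set $p_n=e_{[0,n]}(|x|)\in\cM$ and define
\[
x_n := v|x|p_n = v\left(\int_0^n \lambda\,de_\lambda\right).
\]
Since $|x|p_n$ is a bounded self-adjoint operator (its spectrum is contained in $[0,n]$), and $v\in\cM$, we have $x_n\in\cM$.

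Next I would estimate $\rho_\cM(x,x_n)$. Writing $q_n := \mathbf{1}-p_n = e_{(n,\infty)}(|x|)$, we have $x-x_n = v|x|q_n$. A general fact about closed densely-defined operators $A,B$ affiliated with $\cM$ is that $\ker B \subset \ker(AB)$, so the right supports satisfy $r(AB)\le r(B)$. Applied here,
\[
r(x-x_n) = r(v|x|q_n) \le r(q_n) = q_n.
\]
Hence $\rho_\cM(x,x_n) = \tau_\cM\bigl(r(x-x_n)\bigr) \le \tau_\cM(q_n)$.

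The final step is to show $\tau_\cM(q_n)\to 0$ as $n\to\infty$. Since $|x|$ is a self-adjoint operator affiliated with $\cM$ with spectral measure supported on $[0,\infty)$, the projections $p_n$ increase strongly to $\mathbf{1}$, so $q_n\downarrow 0$. Because $\tau_\cM$ is normal and $\tau_\cM(\mathbf{1})=1<\infty$, we conclude $\tau_\cM(q_n)\downarrow 0$. This gives $\rho_\cM(x,x_n)\to 0$, proving density.

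The whole argument is essentially routine once the right-support inequality $r(AB)\le r(B)$ is in hand; the only point that requires any care is justifying that the tail spectral projections have trace tending to zero, which relies crucially on the type II$_1$ hypothesis via the finiteness and normality of $\tau_\cM$. In particular, the same scheme would fail on a semifinite non-finite factor, where the tail projections $q_n$ need not have finite trace in the first place.
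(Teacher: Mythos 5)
Your proof is correct and follows essentially the same route as the paper: polar decomposition, truncation by the spectral projections $e_{[0,n]}(|x|)$, the bound $r(x-x_n)\le \mathbf{1}-e_n$, and normality plus finiteness of the trace to conclude $\tau_\cM(\mathbf{1}-e_n)\to 0$. The extra care you take in justifying the right-support inequality and the role of the II$_1$ hypothesis is fine but does not change the argument.
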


\begin{proof}
Let  $x\in S(\cM)$  and let  $x=v|x|$ be the polar decomposition of $x.$ Consider the spectral resolution
$|x|=\int\limits_0^\infty \lambda de_\lambda$ of $|x|$ and let $e_n=e_{(0,n]}(|x|)$ be the  spectral projection of $|x|$ corresponding to the interval $(0, n].$ Set $x_n=xe_n,$ $n\in \mathbb{N}.$
Since  $|x|e_n\le n e_n\in \cM,$ it follows that
$x_n=v|x|e_n\in \cM$ for all $n\in \mathbb{N}.$ Further,
\begin{align*}
\rho_{\cM}(x,x_n) &= \tau\left(r(x-x_n)\right)=\tau\left(r(v|x|(\mathbf{1}-e_n))\right)\le\tau\left(\mathbf{1}-e_n\right)\to 0,
\end{align*}
because $e_n\uparrow \mathbf{1}.$ This means that $x_n\stackrel{\rho_{\cM}}\longrightarrow x,$ and therefore
$\cM$ is $\rho_{\cM}$-dense in $S(\cM).$
\end{proof}

In the proof of the next Lemma by $t_\cM$ and $t_\cN$ we denote the measure topologies on $S(\cM)$ and $S(\cN),$ respectively.

\begin{lemma}\label{rhocon}
$\Phi$ is continuous in the topology generated by the rank-metric.
\end{lemma}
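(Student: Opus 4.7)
The plan is to prove the estimate $\rho_\cN(\Phi(x),0)\le\rho_\cM(x,0)$ for every $x\in\cA$; since $\Phi$ is additive, this yields uniform continuity in the rank metric at one stroke.

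Fix $x\in\cA$ and let $p:=r(x)$. Because projections in $S(\cM)$ are bounded, $p\in P(\cM)\subset\cM\subset\cA$. Applying $\Phi$ to $xp=x$ gives $\Phi(x)e=\Phi(x)$, where $e:=\Phi(p)$ is an idempotent in $\cB\subset S(\cN)$. In the $\ast$-regular ring $S(\cN)$ one has the standard identities $r(e)+l(\mathbf{1}-e)=\mathbf{1}$ (a purely algebraic consequence of $e\cdot l(\mathbf{1}-e)=0$, obtained from the partial inverse formula) and $\tau_\cN(l(e))=\tau_\cN(r(e))$ (from the partial isometry in the polar decomposition of $e$). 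Using that the left annihilator of any element coincides with that of its left support, $\Phi(x)(\mathbf{1}-e)=0$ rewrites as $\Phi(x)\,l(\mathbf{1}-e)=0$, i.e.\ $\Phi(x)=\Phi(x)\,r(e)$, and so $r(\Phi(x))\le r(e)$. Taking traces,
$$
\rho_\cN(\Phi(x),0)=\tau_\cN(r(\Phi(x)))\le\tau_\cN(l(\Phi(p))).
$$

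The heart of the argument is the trace identity $\sigma(p):=\tau_\cN(l(\Phi(p)))=\tau_\cM(p)$ for all $p\in P(\cM)$. Clearly $\sigma(\mathbf{1})=1$ and $\sigma$ is monotone. For orthogonal $p,q\in P(\cM)$, $\Phi$ maps $pq=qp=0$ to $\Phi(p)\Phi(q)+\Phi(q)\Phi(p)=0$; multiplying this on the left and on the right by the idempotent $\Phi(p)$ and using $\Phi(p)^2=\Phi(p)$ forces $2\Phi(p)\Phi(q)=0$, hence $\Phi(p)\Phi(q)=\Phi(q)\Phi(p)=0$. Purely algebraic manipulations in $S(\cN)$ then give $l(\Phi(p))\wedge l(\Phi(q))=0$ and $l(\Phi(p+q))=l(\Phi(p))\vee l(\Phi(q))$, and Kaplansky's parallelogram law delivers additivity $\sigma(p+q)=\sigma(p)+\sigma(q)$. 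When $p\sim q$ in $\cM$ via $u\in\cM$, the idempotents $\Phi(p)=\Phi(u^*)\Phi(u)$ and $\Phi(q)=\Phi(u)\Phi(u^*)$ are algebraically equivalent, which (as in \cite{AK2020}) forces $l(\Phi(p))\sim l(\Phi(q))$ in $\cN$, so $\sigma(p)=\sigma(q)$. Consequently $\sigma(p)$ depends only on $\tau_\cM(p)$, so $\sigma(p)=f(\tau_\cM(p))$ for a monotone additive $f\colon[0,1]\to[0,1]$ with $f(1)=1$, forcing $f=\mathrm{id}$ and $\sigma=\tau_\cM|_{P(\cM)}$. Combined with the earlier bound this yields the required inequality.

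The main obstacle is the trace identity $\sigma=\tau_\cM$; the \emph{factor} hypothesis on $\cM$ and $\cN$ enters decisively through uniqueness of the normalized trace, while the algebraic lever is the derivation of $\Phi(p)\Phi(q)=0$ from the skew-commutation $\Phi(p)\Phi(q)+\Phi(q)\Phi(p)=0$, which unlocks Kaplansky's law and promotes ring-theoretic identities to trace additivity.
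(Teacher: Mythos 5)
Your proof is correct, but it follows a genuinely different route from the paper's. The paper shows that $p\mapsto l(\Phi(p))$ is an order (hence lattice) isomorphism of $P(\cM)$ onto $P(\cN)$, invokes von Neumann's representation theorem to produce a ring isomorphism $\Theta$ of $S(\cM)$ onto $S(\cN)$ inducing the same lattice map, imports the measure-topology continuity of $\Theta$ from \cite[Theorem 1.3]{AK2020}, and then transfers continuity back to $\Phi$ through the inequality $l(\Phi(x))\le l(\Theta(l(x)))$; this yields sequential continuity. You instead prove the dimension identity $\tau_\cN(l(\Phi(p)))=\tau_\cM(p)$ directly — additivity on orthogonal projections via $l(\Phi(p))\wedge l(\Phi(q))=0$ and Kaplansky's parallelogram law, invariance under Murray--von Neumann equivalence via algebraically equivalent idempotents, and then uniqueness of the normalized dimension function on a II$_1$ factor — and combine it with $r(\Phi(x))\le r(\Phi(r(x)))$ to get the quantitative bound $\rho_\cN(\Phi(x),\Phi(y))\le\rho_\cM(x,y)$. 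This buys a stronger conclusion ($\Phi$ is a rank-metric contraction, and by symmetry an isometry) without the two external heavy tools, at the price of leaning harder on factoriality (comparison of projections and uniqueness of the trace), whereas the paper's argument for this lemma would survive without it. Two small remarks: the detour through the anticommutator $\Phi(p)\Phi(q)+\Phi(q)\Phi(p)=0$ is unnecessary, since multiplicativity gives $\Phi(p)\Phi(q)=\Phi(pq)=0$ at once; and the ``purely algebraic'' claims do hold but deserve a line each — e.g.\ if $h\le l(\Phi(p))$ and $h\le l(\Phi(q))$ then $h=\Phi(p)h=\Phi(p)\Phi(q)h=0$ using $\Phi(p)l(\Phi(p))=l(\Phi(p))$, which settles $l(\Phi(p))\wedge l(\Phi(q))=0$, while the fact that algebraically equivalent idempotents have equivalent left supports should be cited explicitly from \cite{AK2020}.
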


\begin{proof}
Consider the mapping
\begin{align}\label{latiso}
p\in P(\cM) \mapsto l\left(\Phi(p)\right)\in P(\cN),
\end{align}
where $l(x)$ is the left support of the element $x.$

Let us show that this mapping is a lattice-isomorphism from $P(\cM)$ onto $P(\cN).$

For $p, q\in P(\cM)$ with $p\le q$ we have that
\begin{align*}
l\left(\Phi(p)\right) & =l\left(\Phi(qp)\right)=l\left(\Phi(q)\Phi(p)\right)\le l\left(\Phi(q)\right).
\end{align*}

Let $p, q\in P(\cM)$ be projections such that  $l\left(\Phi(p)\right)\le l\left(\Phi(q)\right).$
Then
\begin{align*}
\Phi(p) & =l\left(\Phi(p)\right)\Phi(p)=l\left(\Phi(q)\right)l\left(\Phi(p)\right)\Phi(p)\stackrel{\eqref{leftpr}}=
\Phi(q)l\left(\Phi(q)\right)l\left(\Phi(p)\right)\Phi(p)\\
&=\Phi(q)l\left(\Phi(p)\right)\Phi(p)=\Phi(q)\Phi(p)=\Phi(qp).
\end{align*}
Since $\Phi$ is a bijection, it follows that $p=qp,$ i.e. $p\le q.$ In particular, if
$l\left(\Phi(p)\right)=l\left(\Phi(q)\right),$ where  $p, q\in P(\cM),$   then
$p=q.$

Let $f\in P(\cN).$ Take an element $x\in S(\cM)$ such that $\Phi(x)=f.$ Then $x$ is an idempotent, and hence
$x=e+w,$ where $e=l(x)$ and $w\in eS(\cM)(\mathbf{1}-e).$
We have that
\begin{align*}
\Phi(e) & =\Phi(x)-\Phi(w)=f-\Phi(e)\Phi(w)(\mathbf{1}-\Phi(e)).
\end{align*}
Therefore
\begin{align*}
\Phi(e)f & =f^2-\Phi(e)\Phi(w)(\mathbf{1}-\Phi(e))f=f-\Phi(e)\Phi(w)(\mathbf{1}-\Phi(e))\Phi(x)\\
&=f-\Phi(e)\Phi(w)(\Phi(x)-\Phi(ex))=f-\Phi(e)\Phi(w)(\Phi(x)-\Phi(x))=f.
\end{align*}
Further,
\begin{align*}
f\Phi(e) & =\Phi(x)\Phi(e)=\Phi(xe)=\Phi((e+w)e)=\Phi(e),
\end{align*}
because $we=w(\mathbf{1}-e)e=0.$ The above two equalities show $l\left(\Phi(e)\right)=f.$

So, the mapping defined by \eqref{latiso} is an order-isomorphism from $P(\cM)$ onto $P(\cN).$
By \cite[Page 24, Lemma 2]{Birkhoff}, this mapping is a lattice-isomorphism from $P(\cM)$ onto $P(\cN),$
that is,
\begin{align*}
l\left(\Phi(p\vee q)\right)=l\left(\Phi(p)\right)\vee l\left(\Phi(q)\right),\,\,
l\left(\Phi(p\wedge q)\right)=l\left(\Phi(p)\right)\wedge l\left(\Phi(q)\right)
\end{align*}
for all $p,q\in P(\cM).$

Since $S(\cM)$ and $S(\cN)$ are regular rings containing the matrix ring over the field of complex numbers of order bigger than $3,$ by \cite[Part II, Theorem 4.2]{Neu60}, the lattice isomorphism of $P(\cM)$ and $P(\cN)$ defined as \eqref{latiso} is generated by a ring isomorphism $\Theta$  from $S(\cM)$ onto $S(\cN),$ i.e., $l\left(\Theta(p)\right)=l\left(\Phi(p)\right)$
for all $p\in P(\cM).$
By \cite[Theorem  1.3]{AK2020} the ring isomorphism $\Theta$ is continuous in the measure topology.

Let $x_n\stackrel{\rho_{\cM}}\longrightarrow 0.$
Then $\tau_\cM(l(x_n))=\rho_\cM(x_n, 0)\rightarrow 0,$ and therefore $l(x_n)\stackrel{t_{\cM}}\longrightarrow 0.$
Since
$\Theta$ is continuous in the measure topology, it follows that
$\Theta(l(x_n))\stackrel{t_{\cN}}\longrightarrow 0.$
By \cite[Lemma 2.2]{AK2020}, it follows that $l\left(\Theta(l(x_n))\right)\stackrel{t_{\cN}}\longrightarrow 0,$
 because $\Theta(l(x_n))$ is an idempotent for all $n.$
Further, we have that
\begin{align*}
l\left(\Phi(x_n)\right) & =l\left(\Phi(l(x_n)x_n)\right)=l\left(\Phi(l(x_n))\Phi(x_n)\right)\le l\left(\Phi(l(x_n))\right)\\
& =l\left(\Theta(l(x_n))\right)\stackrel{t_{\cN}}\longrightarrow 0.
\end{align*}
Thus $\rho_\cN\left(\Phi(x_n),0\right)=\tau_\cN\left(l\left(\Phi(x_n)\right)\right)\to 0.$ This means that $\Phi(x_n)\stackrel{\rho_{\cN}}\longrightarrow 0.$
\end{proof}

\begin{lemma}\label{ext}
A ring isomorphism $\Phi:\cA\to \cB$  extends to a ring isomorphism $\widetilde{\Phi}$ from $S(\cM)$ onto $S(\cN).$
\end{lemma}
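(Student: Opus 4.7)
The plan is to extend $\Phi$ by continuity in the rank-metric topology, after verifying that its domain $\cA$ is dense in $S(\cM)$ and that its image $\cB$ is dense in $S(\cN)$. Since $\cM\subset \cA$, Lemma~\ref{dense} immediately gives that $\cA$ is $\rho_\cM$-dense in $S(\cM)$; by symmetry, $\cN\subset \cB$ yields that $\cB$ is $\rho_\cN$-dense in $S(\cN)$. Applying Lemma~\ref{rhocon} to $\Phi$ shows that $\Phi$ is continuous at $0$, and since $\Phi$ is additive, it is automatically uniformly continuous on $\cA$ with respect to the translation-invariant rank metric. Applying Lemma~\ref{rhocon} to the inverse ring isomorphism $\Phi^{-1}:\cB\to\cA$ gives the analogous uniform continuity of $\Phi^{-1}$.

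Next I would invoke completeness: by \cite[Proposition 2.1]{Ciach}, both $(S(\cM),\rho_\cM)$ and $(S(\cN),\rho_\cN)$ are complete topological $\ast$-rings. By the standard extension-by-uniform-continuity principle for uniformly continuous maps into complete metric spaces, $\Phi$ extends to a (unique) continuous map $\widetilde{\Phi}:S(\cM)\to S(\cN)$, and $\Phi^{-1}$ extends to a continuous map $\widetilde{\Psi}:S(\cN)\to S(\cM)$.

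The key point is then to check that $\widetilde{\Phi}$ is a ring isomorphism. Additivity is clear, since for $x,y\in S(\cM)$ and sequences $x_n,y_n\in\cA$ with $x_n\to x$, $y_n\to y$ in $\rho_\cM$, continuity of addition and of $\widetilde{\Phi}$ give
\[
\widetilde{\Phi}(x+y)=\lim_n \Phi(x_n+y_n)=\lim_n\bigl(\Phi(x_n)+\Phi(y_n)\bigr)=\widetilde{\Phi}(x)+\widetilde{\Phi}(y).
\]
Multiplicativity follows the same way, using that multiplication is jointly continuous in the rank metric (this is the topological $\ast$-ring structure, and is precisely the step where one needs the result of Ciach): $\Phi(x_ny_n)=\Phi(x_n)\Phi(y_n)$ passes to the limit to yield $\widetilde{\Phi}(xy)=\widetilde{\Phi}(x)\widetilde{\Phi}(y)$. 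Finally, $\widetilde{\Psi}\circ\widetilde{\Phi}$ and $\widetilde{\Phi}\circ\widetilde{\Psi}$ are continuous maps that agree with the identity on the dense sets $\cA$ and $\cB$ respectively, hence are the identity on all of $S(\cM)$ and $S(\cN)$; so $\widetilde{\Phi}$ is a bijection.

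The only conceptual obstacle is the joint continuity of multiplication in the rank metric, but this is exactly what the cited topological $\ast$-ring property of $(S(\cM),\rho_\cM)$ provides. Everything else reduces to the general principle that a uniformly continuous ring homomorphism between dense subrings of complete topological rings extends uniquely to a ring homomorphism of the completions.
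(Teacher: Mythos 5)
Your proof is correct and follows the same overall strategy as the paper: extend $\Phi$ by $\rho$-continuity from the dense subalgebra $\cA$ (Lemmas~\ref{dense} and~\ref{rhocon}) to a map $\widetilde{\Phi}$ on the $\rho_\cM$-complete space $S(\cM)$, and pass the ring operations to the limit using the topological $\ast$-ring structure of $(S(\cN),\rho_\cN)$. The one step you handle genuinely differently is injectivity. The paper proves surjectivity essentially as you do (via continuity of $\Phi^{-1}$ and completeness of $S(\cM)$), but for injectivity it gives a separate hands-on argument: if $\widetilde{\Phi}(x)=0$, it multiplies by $\widetilde{\Phi}(e_n)$ for the spectral projections $e_n=e_{(0,n]}(|x|)$, notes $xe_n\in\cM\subset\cA$ so that $\Phi(xe_n)=0$ forces $xe_n=0$ by injectivity of $\Phi$, and concludes $x=0$ from $e_n\uparrow\mathbf{1}$. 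You instead extend $\Phi^{-1}$ to a continuous $\widetilde{\Psi}$ and observe that $\widetilde{\Psi}\circ\widetilde{\Phi}$ and $\widetilde{\Phi}\circ\widetilde{\Psi}$ are continuous and agree with the identity on the dense sets $\cA$ and $\cB$, hence are the identity everywhere; this yields injectivity and surjectivity simultaneously and is arguably cleaner, at the cost of introducing the second extension $\widetilde{\Psi}$ as an object in its own right. Both arguments are complete; your appeal to uniform continuity of the additive map $\Phi$ with respect to the translation-invariant metric $\rho$ is a legitimate packaging of the paper's explicit Cauchy-sequence verification.
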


\begin{proof} By Lemma~\ref{dense} the $\ast$-subalgebra $\cM$ is $\rho_\cM$-dense in $S(\cM),$ and therefore $\cA$ is also
  $\rho_\cM$-dense in $S(\cM).$ Using this observation we can define a mapping
$\widetilde{\Phi}$ from $S(\cM)$ into $S(\cN)$ as
\begin{align}\label{extring}
\widetilde{\Phi} & =\rho_\cN-\lim\limits_{n\to\infty}\Phi(x_n),
\end{align}
where $\{x_n\}\subset \cA$ is a sequence such that $x_n\stackrel{\rho_{\cM}}\longrightarrow x.$

Let us show the mapping $\widetilde{\Phi}$ is a well-defined ring isomorphism.

Firstly, we shall show that this mapping is well-defined.
Let  $\{x_n\}\subset \cA$ be  a sequence such that $x_n\stackrel{\rho_{\cM}}\longrightarrow x.$
Then $x_n-x_m\stackrel{\rho_{\cM}}\longrightarrow 0$ as $n, m\to\infty.$
Since by Lemma~\ref{rhocon}, $\Phi$ is continuous in the topology generated by the rank-metric, it follows that
$\Phi(x_n)-\Phi(x_m)\stackrel{\rho_{\cN}}\longrightarrow 0$ as $n, m\to\infty.$
Since $S(\cN)$ is $\rho_\cN$-complete, it follows that there exists
$\rho_\cN-\lim\limits_{n\to\infty}\Phi(x_n)\in S(\cN).$
So, the limit on the right side of \eqref{extring} exists.

Now suppose that  $\{x_n\}, \{x'_n\}\subset \cA$ are sequences such that $x_n\stackrel{\rho_{\cM}}\longrightarrow x$ and
\linebreak $x'_n\stackrel{\rho_{\cM}}\longrightarrow x.$
Then $x_n-x'_n\stackrel{\rho_{\cM}}\longrightarrow 0$ as $n\to\infty,$
and again by $\rho_\cM$-$\rho_\cN$-continuity of $\Phi$ we obtain that
$\Phi(x_n)-\Phi(x'_n)\stackrel{\rho_{\cN}}\longrightarrow 0$ as $n\to\infty.$
Thus
$$
\widetilde{\Phi}(x)=\rho_\cN-\lim\limits_{n\to\infty}\Phi(x_n)=\rho_\cN-\lim\limits_{n\to\infty}\Phi(x'_n).
$$
So, $\widetilde{\Phi}$ is a well-defined mapping.

Let us show the additivity and multiplicativity of $\widetilde{\Phi}.$ For $x, y\in S(\cM)$ take sequences
$\{x_n\}, \{y_n\}\subset \cA$ such that
$x_n\stackrel{\rho_{\cM}}\longrightarrow x$ and
$y_n\stackrel{\rho_{\cM}}\longrightarrow y.$ Then
\begin{align*}
\widetilde{\Phi}(x) + \widetilde{\Phi}(y) & =\lim\limits_{n\to\infty}\Phi(x_n)+\lim\limits_{n\to\infty}\Phi(x_n)=
\lim\limits_{n\to\infty}\Phi(x_n+y_n)=\widetilde{\Phi}(x+y).
\end{align*}
By a similar argument we get the  multiplicativity of $\widetilde{\Phi}.$

The next step is the proof of the surjectivity of $\widetilde{\Phi}.$

Let $y\in S(\cN)$ and let $\{y_n\}\subset \cB$ be a sequence such that $y_n\stackrel{\rho_{\cN}}\longrightarrow y.$
Then $y_n-y_m\stackrel{\rho_{\cN}}\longrightarrow 0$ as $n, m\to\infty.$
Since by Lemma~\ref{rhocon}, $\Phi^{-1}$ is $\rho_\cN$-$\rho_\cM$-continuous, it follows that
$\Phi^{-1}(y_n)-\Phi^{-1}(y_m)\stackrel{\rho_{\cM}}\longrightarrow 0$ as $n, m\to\infty.$
Since $S(\cM)$ is $\rho_\cM$-complete, it follows that there exists
$x=\rho_\cM-\lim\limits_{n\to\infty}\Phi^{-1}(y_n)\in S(\cM).$
Then
$$
\widetilde{\Phi}(x)=\rho_\cN-\lim\limits_{n\to\infty}\Phi(\Phi^{-1}(y_n))=\rho_\cN-\lim\limits_{n\to\infty}y_n=y.
$$

The final  step of the proof is the injectivity  of $\widetilde{\Phi}.$

Let $x\in S(\cM)$ and suppose that $\widetilde{\Phi}(x)=0.$
Let  $x=v|x|$ be the polar decomposition of $x.$ Consider the spectral resolution
$|x|=\int\limits_0^\infty \lambda de_\lambda$ of $|x|$ and let $e_n=e_{(0,n]}(|x|)$ be the  spectral projection of $|x|$ corresponding to the interval $(0, n].$
Note that $xe_n\in  \cM\subset \cA$ for all $n\in \mathbb{N}.$ Further,
\begin{align*}
0 & = \widetilde{\Phi}(x)\widetilde{\Phi}(e_n)=\widetilde{\Phi}(xe_n)=\Phi(xe_n).
\end{align*}
Since $\Phi$ is a ring isomorphism, it follows that $xe_n=0$ for all $n\in \mathbb{N}.$
From $e_n\uparrow \mathbf{1},$ we have that  $xe_nx^\ast \uparrow x x^\ast.$ Thus $xx^\ast=0,$ and hence $x=0.$ The proof is complete.
\end{proof}

\begin{proof}[Proof of Theorem \ref{ringisomorphism}]
By Lemma~\ref{ext}
a ring isomorphism $\Phi:\cA\to \cB$  extends to a ring isomorphism $\widetilde{\Phi}$ from $S(\cM)$ onto $S(\cN).$
Then by \cite[Theorem 1.4]{AK2020}  there exist an invertible element
$a \in S(\cN)$ and a real
$\ast$-isomorphism $\Psi: \cM \to  \cN$ (which extends to a real
$\ast$-isomorphism from $S(\cM)$ onto $S(\cN)$) such that
$\widetilde{\Phi}(x) = a\Psi(x)a^{-1}$ for
all $x \in  S(\cM).$

Let us first to show that $a\in \cB$ and $a^{-1}\in\cB.$

Let    $a=v|a|$ be the polar decomposition of $a.$ Since $a$ is invertible, it follows that $v$ is unitary.
Since
$$
a\Psi(x)a^{-1}=v|a|v^\ast v\Psi(x)v^\ast v|a|^{-1}v^\ast,
$$
 replacing, if necessary, $a$ to $v|a|v^\ast$ and $\Phi$ to $v\Psi(\cdot)v^\ast,$ we can assume that $a$ is a positive invertible element in
$S(\cN).$

Consider the spectral resolution
$a=\int\limits_0^\infty \lambda de_\lambda$ of $a$
and let $e_\lambda=e_{(0,\lambda]}(a)$ be the  spectral projection of $a$ corresponding to the interval $(0, \lambda],$ $\lambda>0.$

If $a\in \cN,$ then $a\in \cB,$ because $\cN\subset \cB.$ So, we need to consider the case $a\in S(\cN)\setminus \cN.$
Then there exists a positive number $\lambda$ such that
$\tau_\cN(\mathbf{1}-e_{\lambda})\le \frac{1}{2}.$ Since $\tau_\cM$ is a normalised trace, it follows that
\begin{align*}
\tau_\cN(e_{\lambda}) & \ge\frac{1}{2}\ge \tau_\cN(\mathbf{1}-e_{\lambda}).
\end{align*}
Since $\cN$ is a von Neumann factor of type II$_1,$ it follows that
$\mathbf{1}-e_{\lambda}\precsim e_{\lambda}.$ Take a partial isometry
$u\in \cN$ such that $u u^\ast=\mathbf{1}-e_{\lambda}$ and $u^\ast u \le e_{\lambda}.$
By the choice of the spectral projection $e_{\lambda}$ we obtain that
\begin{align}\label{lambda12}
ae_{\lambda}\le \lambda e_{\lambda}.
\end{align}
Further, let  $w$ be an element in $\cA$ such that $\Psi(w)=u.$ Note that  $w$ is also a partial isometry in $\cM\subset \cA.$ We have that
\begin{align*}
\Phi(w) & = \widetilde{\Phi}(w) = a\Psi(w)a^{-1}= a u a^{-1}\\
& = a (u u^\ast) u a^{-1}=a(\mathbf{1}-e_{\lambda}) u a^{-1}.
\end{align*}
Multiplying  the last equality from the right side by the element $a e_{\lambda}u^\ast$ we obtain
\begin{align*}
\Phi(w) a e_{\lambda}u^\ast & = \Big(a(\mathbf{1}-e_{\lambda}) u a^{-1}\Big)a e_{\lambda}u^\ast=a(\mathbf{1}-e_{\lambda}) u e_{\lambda}u^\ast\\
&=a(\mathbf{1}-e_{\lambda}) u u^\ast u e_{\lambda}u^\ast=a(\mathbf{1}-e_{\lambda}) u u^\ast u u^\ast=a(\mathbf{1}-e_{\lambda}),
\end{align*}
because $u u^\ast=\mathbf{1}-e_{\lambda}$ and $u^\ast u \le e_{\lambda}.$ Taking into account \eqref{lambda12} and the inclusions  $u^\ast, \Phi(w)\in \cB,$ from the last equality, we conclude that
$a(\mathbf{1}-e_{\lambda})\in \cB.$
Hence
$$
a=ae_{\lambda}+a(\mathbf{1}-e_{\lambda})\in \cN+\cB\subset \cB.
$$
By a similar argument we can show that $a^{-1}\in \cB.$

Finally, we
show that the restriction $\Psi|_\cA$ of the real $\ast$-isomorphism $\Psi$ onto $\cA$ maps $\cA$ onto $\cB.$
Indeed, since $a, a^{-1}\in \cB,$ it follows that
$$
\Psi(x)=a^{-1}\widetilde{\Phi}(x)a=a^{-1}\Phi(x)a\in \cB
$$
for all $x\in \cA.$ Further, considering the inverse map $\Phi^{-1}$ which  acts as
$$
\Phi^{-1}(y) = \Psi^{-1}\left(a^{-1}\right)\Psi^{-1}(x)\Psi^{-1}(a),\,\,y \in  \cB,
$$
we conclude that both $\Psi^{-1}(a)$ and $\Phi^{-1}(a)^{-1}$  are in $\cA,$ and that  $\Psi^{-1}$ maps $\cB$ onto $\cA.$ So,
$\Psi(\cA)=\cB.$
The proof is complete.
\end{proof}

\begin{corollary}\label{metralg}
Let $\cA$ and $\cB$ be $\ast$-subalgebras from Theorem~\ref{ringisomorphism}. Suppose that these algebras are equipped with  metrics
$\rho_\cA$ and $\rho_\cB$  respectively, such that both $\left(\cA,\rho_\cA\right),$  $\left(\cB,\rho_\cB\right)$ are complete topological $\ast$-algebras and additionally, convergence with respect their metrics implies the convergence in measure. Then any ring isomorphism $\Phi:\cA\to \cB$ is $\rho_\cA$-$\rho_\cB$-continuous.
\end{corollary}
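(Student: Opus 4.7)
The plan is to deduce Corollary~\ref{metralg} from Theorem~\ref{ringisomorphism} via a closed graph argument. By Theorem~\ref{ringisomorphism} the ring isomorphism $\Phi$ is already real-linear and continuous when $\cA$ and $\cB$ are equipped with the measure topologies $t_{\cM}|_{\cA}$ and $t_{\cN}|_{\cB}$. Since $(\cA,\rho_\cA)$ and $(\cB,\rho_\cB)$ are, by assumption, complete metrizable topological $\ast$-algebras over $\mathbb{C}$, they are in particular complete metrizable real topological vector spaces; by Birkhoff--Kakutani they carry equivalent translation-invariant complete metrics, hence are F-spaces over $\mathbb{R}$. The closed graph theorem for $\mathbb{R}$-linear maps between F-spaces therefore reduces the task to showing that the graph of $\Phi$ is closed in $(\cA,\rho_\cA)\times (\cB,\rho_\cB)$.

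To verify closedness of the graph, I would take sequences $x_n\to x$ in $(\cA,\rho_\cA)$ with $\Phi(x_n)\to y$ in $(\cB,\rho_\cB)$ and argue $y=\Phi(x)$ as follows. By the standing hypothesis, convergence in $\rho_\cA$ (respectively $\rho_\cB$) implies convergence in the measure topology, so $x_n\to x$ in $t_{\cM}$ and $\Phi(x_n)\to y$ in $t_{\cN}$. The measure-continuity of $\Phi$ provided by Theorem~\ref{ringisomorphism} then gives $\Phi(x_n)\to \Phi(x)$ in $t_{\cN}$, and the Hausdorff property of the measure topology forces $y=\Phi(x)$. Hence the graph is closed and $\Phi$ is $\rho_\cA$-$\rho_\cB$-continuous.

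The main conceptual obstacle is the verification that one is genuinely in the F-space framework required by the closed graph theorem, because the completeness and invariance of the metric are not assumed in the same breath; the Birkhoff--Kakutani replacement of $\rho_\cA,\rho_\cB$ by equivalent translation-invariant complete metrics should be recorded explicitly, together with the remark that continuity of the $\mathbb{C}$-action on the $\ast$-algebras automatically provides the $\mathbb{R}$-topological vector space structure that the real-linearity of $\Phi$ requires. A direct alternative, exploiting the representation $\Phi(x)=a\Psi(x)a^{-1}$ from Theorem~\ref{ringisomorphism}, would require proving $\rho_\cA$-$\rho_\cB$-continuity of $\Psi$ separately, which is essentially the same problem; thus the closed graph route is both shorter and structurally cleaner.
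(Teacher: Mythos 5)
Your proposal is correct and follows essentially the same route as the paper: the paper likewise reduces the statement to the closed graph theorem and verifies closedness of the graph by passing to the measure topology, where continuity of $\Phi$ is guaranteed by Theorem~\ref{ringisomorphism}. Your additional remarks on real-linearity and the F-space framework only make explicit the hypotheses that the paper's citation of the closed graph theorem leaves implicit.
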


\begin{proof}
Take a sequence $\{x_n\}\subset \cA$ such that $x_n\stackrel{\rho_\cA}{\longrightarrow}0$
 and  $\Phi(x_n)\stackrel{\rho_\cB}{\longrightarrow} y\in \cB,$ in particular, $\Phi(x_n)\rightarrow y$ in the measure topology in $S(\cN).$ Since $\rho_\cA$-convergence implies the convergence in  measure  in $S(\cM),$
 it follows that $x_n\rightarrow 0$ in the measure topology. Further, the continuity of $\Phi$ in the measure topology implies
that $\Phi(x_n)\rightarrow 0$ in the measure topology in $S(\cN).$ Thus $y=0,$ and hence by the closed graph theorem (see
\cite[Page 79]{Yos}), we conclude that $\Phi$ is $\rho_\cA$-$\rho_\cB$-continuous.
\end{proof}

\begin{remark}
Note that the noncommutative Arens algebras and noncommutative $\cL_{log}$-algebras
associated with  von Neumann factors of type II$_1$ satisfy the  conditions of Corollary~\ref{metralg}.
Indeed, in the following series identical imbeddings are continuous
\begin{align*}
\left(L^\omega(\cM, \tau),\{||\cdot||_p\}_{p\ge 1}\right)\subset \left(L_1(\cM, \tau),||\cdot||_1\right)\subset\left(L_{log}(\cM, \tau),||\cdot||_{log}\right)
\subset\left(S(\cM), t_\tau\right).
\end{align*}
The  continuity of the first imbedding  immediately follows from the definition, the continuity of the  second and the third imbeddings follow from \cite[Proposition 4.7 and Remark 4.8]{DSZ16}.
\end{remark}

\section{Discontinuous algebra automorphism  of  a $\ast$-regular algebra}

In this Section we show that there is a $\ast$-regular subalgebra
of algebra of all measurable operators with respect to the
hyperfinite factor of type II$_1$ which admits an algebra automorphism, discontinuous in the measure topology.

Let $\mathcal{R}$ be the hyperfinite  II$_1$-factor  with the faithful normal normalised trace $\tau.$
There is  a  system of matrix units
$\mathcal{E}=\left\{e^{(n)}_{ij}:\  n=0,1,\dots, \ i,j=1,\dots,2^n\right\}$ in  $\mathcal{R}$
(here $e^{(0)}_{1,1}=\mathbf{1}$) such that \cite{Tau}
\begin{itemize}
\item[(a)] $e^{(n)}_{ij}e^{(n)}_{k,l}=\delta_{jk}e^{(n)}_{il};$
\item[(b)] $\left(e^{(n)}_{ij}\right)^\ast=e^{(n)}_{ji};$
\item[(c)] $\sum\limits_{i=1}^{2^n}e^{(n)}_{ii}=\mathbf{1};$
\item[(d)] $e^{(n)}_{ij}=e^{(n+1)}_{2i-1,2j-1}+e^{(n+1)}_{2i,2j}.$
%\item[(e)] $\overline{span(\mathcal{E})}^w=\mathcal{R}.$
%\item[(f)] a $\ast$-subalgebra  $\mathcal{D}=\overline{span(\mathcal{E}_d)}^w$, where
%$\mathcal{E}_d=\left\{e^{(n)}_{ii}:\ n=0,1, \ldots, \ i=1,\dots,2^n\right\}$ is a maximal abelian $\ast$-subalgebra in
% $\mathcal{R}.$
\end{itemize}

For any  $n=0,1, \ldots$ denote by $\mathcal{R}_n$ the $\ast$-subalgebra of
$\mathcal{R}$, generated by the system of matrix units $\left\{e^{(n)}_{ij}:\ i,j=1,\dots,2^n\right\}.$
Then
$$
\mathcal{R}_0\subset \mathcal{R}_1\subset\dots\subset\mathcal{R}_n\subset\dots
$$
and each $\ast$-subalgebra $\mathcal{R}_n$ is $\ast$-isomorphic to the algebra of all $2^n\times 2^n$-matrices over the field $\mathbb{C}.$ Set
$$
\mathcal{R}_\infty=\bigcup_{n=1}^\infty\mathcal{R}_n.
$$
Then $\mathcal{R}_\infty$ is a $\ast$-regular algebra as a sum of  increasing  sequence of matrix algebras
(see e.g. \cite[Theorem  3]{Skor}).

Now we begin to construct a discontinuous  algebra automorphism of $\mathcal{R}_\infty.$

Define  the sequences $\left\{a_n: n=1,2, \ldots\right\}$ and $\left\{c_n: n=1,2, \ldots\right\}$ $(a_n, c_n\in \mathcal{R}_n)$ by the   rule
\begin{eqnarray*}\label{c_n}
&c_n & = 2^n\sum\limits_{k=1}^{2^{n-1}} e_{2k-1,2k-1}^{(n)}+\sum\limits_{k=1}^{2^{n-1}} e_{2k,2k}^{(n)}, \, n\in \mathbb{N}
\end{eqnarray*}
and
\begin{eqnarray*}\label{c_n}
&a_n & = \prod\limits_{k=1}^{n} c_k, \, n\in \mathbb{N}.
\end{eqnarray*}
Note that all $a_n, c_n$ are invertible in $\mathcal{R}_n.$

For $n\geq 1$ define an algebra automorphism $\Phi_n$ of $\mathcal{R}_n$ as follows
$$
\Phi_n(x)=a_n x a_n^{-1}, \, x\in \mathcal{R}_n.
$$

\begin{lemma}\label{deltan-1}
$\Phi_n|_{\mathcal{R}_{n-1}}=\Phi_{n-1}$  for all $n\in \mathbb{N}.$
\end{lemma}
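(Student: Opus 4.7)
The plan is to reduce the lemma to an explicit commutation statement: since $a_n = a_{n-1} c_n$, the equality $\Phi_n(x) = \Phi_{n-1}(x)$ for $x \in \mathcal{R}_{n-1}$ is equivalent to
\[
a_{n-1} c_n x c_n^{-1} a_{n-1}^{-1} = a_{n-1} x a_{n-1}^{-1},
\]
which, upon conjugating by $a_{n-1}^{-1}$ on the left and $a_{n-1}$ on the right, is equivalent to $c_n x = x c_n$ for every $x \in \mathcal{R}_{n-1}$. So the task reduces to showing that $c_n$ lies in the commutant of $\mathcal{R}_{n-1}$ inside $\mathcal{R}_n$.

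Since $\mathcal{R}_{n-1}$ is generated as a $\ast$-algebra by the matrix units $e^{(n-1)}_{ij}$ with $i,j = 1,\dots,2^{n-1}$, it suffices to check that $c_n$ commutes with each $e^{(n-1)}_{ij}$. I would use property (d) to split $e^{(n-1)}_{ij} = e^{(n)}_{2i-1,2j-1} + e^{(n)}_{2i,2j}$, and then the multiplication rule (a) to compute directly:
\[
c_n e^{(n)}_{2i-1,2j-1} = 2^{n} e^{(n)}_{2i-1,2j-1} = e^{(n)}_{2i-1,2j-1} c_n,
\]
\[
c_n e^{(n)}_{2i,2j} = e^{(n)}_{2i,2j} = e^{(n)}_{2i,2j} c_n,
\]
because the only diagonal entry of $c_n$ hit by $e^{(n)}_{2i-1,2j-1}$ on either side is the odd-indexed one (contributing the scalar $2^n$), while $e^{(n)}_{2i,2j}$ picks up only an even-indexed diagonal entry (contributing the scalar $1$). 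Summing the two equalities gives $c_n e^{(n-1)}_{ij} = e^{(n-1)}_{ij} c_n$, which completes the verification.

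There is no real obstacle here; the only point requiring a little care is bookkeeping with the indices when multiplying matrix units on the left versus the right of $c_n$, to make sure one is selecting the correct diagonal coefficient in each case. Once the commutation $[c_n, \mathcal{R}_{n-1}] = 0$ is established, the lemma is immediate.
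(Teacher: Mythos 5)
Your proposal is correct and follows essentially the same route as the paper: reduce the claim to $[c_n,\mathcal{R}_{n-1}]=0$ via $a_n=a_{n-1}c_n$, then verify commutation with each $e^{(n-1)}_{ij}$ using property (d) and the matrix-unit multiplication rule. The computations check out, so nothing further is needed.
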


\begin{proof}
Since $a_{n-1}^{-1}a_n=a_na_{n-1}^{-1}=c_n,$ it suffices to  show that
$$
\left[c_n, \mathcal{R}_{n-1}\right]=0.
$$
Using the property (d) of matrix units, for fixed $1\leq i, j \leq 2^{n-1}$ we have that
\begin{eqnarray*}
\left[c_n, e^{(n-1)}_{i, j}\right] & \stackrel{(d)}= & \left[2^n\sum\limits_{k=1}^{2^{n-1}} e_{2k-1,2k-1}^{(n)}+
\sum\limits_{k=1}^{2^{n-1}} e_{2k,2k}^{(n)}, e^{(n)}_{2i-1, 2j-1}+e^{(n)}_{2i, 2j}\right] \\
& = & 2^n\left[\sum\limits_{k=1}^{2^{n-1}} e_{2k-1,2k-1}^{(n)}, e^{(n)}_{2i-1, 2j-1}+e^{(n)}_{2i, 2j}\right]+\left[\sum\limits_{k=1}^{2^{n-1}} e_{2k,2k}^{(n)}, e^{(n)}_{2i-1, 2j-1}+e^{(n)}_{2i, 2j}\right]\\
&=& 2^ne_{2i-1,2i-1}^{(n)}e^{(n)}_{2i-1, 2j-1}-2^ne^{(n)}_{2i-1, 2j-1}e^{(n)}_{2j-1, 2j-1}+ e_{2i,2i}^{(n)}e^{(n)}_{2i, 2j}-e^{(n)}_{2i, 2j}e^{(n)}_{2j, 2j}=0.
\end{eqnarray*}
\end{proof}

\begin{lemma}%\label{delta}
There exists an algebra isomorphism $\Phi:\mathcal{R}_\infty\to \mathcal{R}_\infty$ such that $\Phi|_{\mathcal{R}_{n}}=\Phi_{n}$ for all $n=1,2,\ldots.$
\end{lemma}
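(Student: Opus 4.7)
The plan is to construct $\Phi$ as a direct-limit type map, using the compatibility statement $\Phi_n|_{\mathcal{R}_{n-1}} = \Phi_{n-1}$ furnished by the preceding lemma. Concretely, for $x \in \mathcal{R}_\infty$ choose any $n$ with $x \in \mathcal{R}_n$ and set $\Phi(x) := \Phi_n(x) = a_n x a_n^{-1}$. The first step is well-definedness: if $x \in \mathcal{R}_n \cap \mathcal{R}_m$ with, say, $n \le m$, then by iterating Lemma~\ref{deltan-1} one gets $\Phi_m|_{\mathcal{R}_n} = \Phi_n$, so the two candidate values of $\Phi(x)$ coincide.

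Next I would verify that $\Phi$ is an algebra homomorphism. Given $x, y \in \mathcal{R}_\infty$, pick $n$ large enough so that both $x, y \in \mathcal{R}_n$ (possible since the $\mathcal{R}_n$ form an increasing chain). Then $x+y, xy, \lambda x$ also lie in $\mathcal{R}_n$, and since $\Phi_n$ is an algebra automorphism of $\mathcal{R}_n$ we get $\Phi(x+y) = \Phi_n(x+y) = \Phi_n(x) + \Phi_n(y) = \Phi(x) + \Phi(y)$ and analogously for products and scalar multiples. Also, $\Phi(x) = a_n x a_n^{-1} \in \mathcal{R}_n \subset \mathcal{R}_\infty$, so $\Phi$ maps $\mathcal{R}_\infty$ into itself.

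For bijectivity, injectivity is immediate: if $\Phi(x) = 0$ for some $x \in \mathcal{R}_n$, then $\Phi_n(x) = 0$, and since $\Phi_n$ is an automorphism of $\mathcal{R}_n$ this forces $x = 0$. For surjectivity, given $y \in \mathcal{R}_\infty$ choose $n$ with $y \in \mathcal{R}_n$; then $x := a_n^{-1} y a_n \in \mathcal{R}_n \subset \mathcal{R}_\infty$ satisfies $\Phi(x) = \Phi_n(x) = y$. Hence $\Phi$ is an algebra automorphism of $\mathcal{R}_\infty$ whose restriction to each $\mathcal{R}_n$ equals $\Phi_n$ by construction.

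There is no real obstacle here: the whole point of Lemma~\ref{deltan-1} was to make the inner automorphisms $\Phi_n$ compatible with the inclusions $\mathcal{R}_{n-1} \hookrightarrow \mathcal{R}_n$, so that the direct limit assembles into a well-defined map on $\mathcal{R}_\infty = \bigcup_n \mathcal{R}_n$. The interesting content of the section lies further on, when one exhibits that this $\Phi$ fails to be continuous in the measure topology (presumably because $\|a_n\|_{\mathcal{R}} \to \infty$ while $a_n$ stays bounded in the rank or measure topology, making $\Phi$ unbounded on a measure-null sequence). The present lemma itself is essentially formal.
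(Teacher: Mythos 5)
Your proposal is correct and follows the same route as the paper: the paper likewise defines $\Phi$ by $\Phi|_{\mathcal{R}_n}=\Phi_n$ on the union $\bigcup_n \mathcal{R}_n$, invokes the compatibility $\Phi_n|_{\mathcal{R}_{n-1}}=\Phi_{n-1}$ for well-definedness, and then observes that $\Phi$ is an algebra automorphism. You merely spell out the homomorphism and bijectivity checks that the paper leaves as ``clear.''
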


\begin{proof}
Define the mapping $\Phi:\bigcup_{n=1}^\infty\mathcal{R}_n\to \bigcup_{n=1}^\infty\mathcal{R}_n$ by setting $\Phi|_{\mathcal{R}_n}=\Phi_n.$ By Lemma~\ref{deltan-1} we have  $\Phi_{n}|_{\mathcal{R}_{n-1}}=\Phi_{n-1}$, and therefore, $\Phi$ is a well-defined mapping. It is clear that $\Phi$ is an algebra automorphism of  $\mathcal{R}_\infty.$
\end{proof}

\begin{lemma}%\label{an}
For each $n\geq 1$ the element $a_n$ can be represented as
\begin{eqnarray}\label{inveran}
a_n=\sum\limits_{k=1}^{2^n}\gamma_k^{(n)}e_{k,k}^{(n)}
\end{eqnarray}
where
\begin{eqnarray}\label{gamma}
\gamma_{2k-1}^{(n)}=2^n \gamma_{2k}^{(n)}
\end{eqnarray}
for all  $n\geq1$ and $k=1, \ldots, 2^{n-1}.$
\end{lemma}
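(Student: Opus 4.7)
The plan is to prove both the diagonal form of $a_n$ and the ratio relation simultaneously by induction on $n$, using property (d) of the matrix units to rewrite level-$(n-1)$ diagonal elements in the level-$n$ basis.

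For the base case $n=1$, I would simply observe that $a_1 = c_1 = 2\,e_{1,1}^{(1)} + e_{2,2}^{(1)}$, so taking $\gamma_1^{(1)} = 2$ and $\gamma_2^{(1)} = 1$ immediately gives the claimed identity, with the ratio relation $\gamma_1^{(1)} = 2^1 \gamma_2^{(1)}$ satisfied.

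For the inductive step, I would assume that
\[
a_{n-1} = \sum_{k=1}^{2^{n-1}} \gamma_k^{(n-1)} e_{k,k}^{(n-1)}, \qquad \gamma_{2k-1}^{(n-1)} = 2^{n-1} \gamma_{2k}^{(n-1)},
\]
and use property (d), namely $e_{k,k}^{(n-1)} = e_{2k-1,2k-1}^{(n)} + e_{2k,2k}^{(n)}$, to rewrite
\[
a_{n-1} = \sum_{k=1}^{2^{n-1}} \gamma_k^{(n-1)}\bigl(e_{2k-1,2k-1}^{(n)} + e_{2k,2k}^{(n)}\bigr).
\]
Then $a_n = a_{n-1} c_n$, and since both factors are diagonal with respect to the level-$n$ matrix units $e_{i,i}^{(n)}$ (these are mutually orthogonal idempotents summing to $\mathbf{1}$), the multiplication becomes coordinate-wise: the coefficient of $e_{2k-1,2k-1}^{(n)}$ in $a_n$ is $\gamma_k^{(n-1)} \cdot 2^n$ and the coefficient of $e_{2k,2k}^{(n)}$ is $\gamma_k^{(n-1)} \cdot 1$. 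Hence setting $\gamma_{2k-1}^{(n)} = 2^n \gamma_k^{(n-1)}$ and $\gamma_{2k}^{(n)} = \gamma_k^{(n-1)}$ gives the form \eqref{inveran}, and the ratio $\gamma_{2k-1}^{(n)}/\gamma_{2k}^{(n)} = 2^n$ is exactly \eqref{gamma}.

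There is no real obstacle here: the only point requiring a small care is justifying that all the $c_j$ for $j \le n$, after expressing in the level-$n$ matrix units via repeated application of (d), yield diagonal elements that commute and multiply coordinate-wise. But this is immediate from (a) and the orthogonality of $\{e_{i,i}^{(n)}\}$, so the induction closes cleanly.
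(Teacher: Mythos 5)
Your proof is correct and follows essentially the same route as the paper: induction on $n$ with base case $a_1 = 2e_{1,1}^{(1)} + e_{2,2}^{(1)}$, then expanding $a_{n-1}$ in the level-$n$ diagonal units via property (d) and multiplying coordinate-wise by $c_n$ to get $\gamma_{2k-1}^{(n)} = 2^n\gamma_k^{(n-1)}$ and $\gamma_{2k}^{(n)} = \gamma_k^{(n-1)}$. Nothing further is needed.
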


\begin{proof}
The proof is by the induction on $n.$ For $n=1$ we have that
\begin{eqnarray*}
a_1 & = & 2^1 e_{1,1}^{(1)}+e_{2,2}^{(1)},
\end{eqnarray*}
and therefore
$
\gamma_1^{(1)}=2\gamma_{2}^{(1)}.
$
Suppose that we have proved  the required assertion  for $n-1.$ Taking into account that $a_n=a_{n-1}c_n$ and the equality $e^{(n-1)}_{k,k}=e^{(n)}_{2k-1,2k-1}+e^{(n)}_{2k,2k}$ we have that
\begin{eqnarray*}
&a_n & = a_{n-1}c_n=\sum\limits_{l=1}^{2^{n-1}}\gamma_l^{(n-1)}e_{l,l}^{(n-1)}\Big(2^n\sum\limits_{k=1}^{2^{n-1}} e_{2k-1,2k-1}^{(n)}+\sum\limits_{k=1}^{2^{n-1}} e_{2k,2k}^{(n)}\Big)\\
&=& \Big(\sum\limits_{l=1}^{2^n} \gamma_l^{(n-1)}e_{2l-1,2l-1}^{(n)}+\gamma_l^{(n-1)}e_{2l,2l}^{(n)}\Big)\Big(2^n\sum\limits_{k=1}^{2^{n-1}} e_{2k-1,2k-1}^{(n)}+\sum\limits_{k=1}^{2^{n-1}} e_{2k,2k}^{(n)}\Big)\\
&=& \sum\limits_{k=1}^{2^n}\Big(2^n\gamma_k^{(n-1)}e_{2k-1,2k-1}^{(n)}+\gamma_k^{(n-1)}e_{2k,2k}^{(n)}\Big).
\end{eqnarray*}
Thus
\begin{eqnarray*}
a_n=\sum\limits_{k=1}^{2^n}\gamma_k^{(n)}e_{k,k}^{(n)},
\end{eqnarray*}
where
\begin{eqnarray*}
\gamma_{2k-1}^{(n)} = 2^n\gamma_{k}^{(n-1)},\,\,\gamma_{2k}^{(n)} = \gamma_{k}^{(n-1)}.
\end{eqnarray*}
Hence
$
\gamma_{2k-1}^{(n)} = 2^n\gamma_{2k}^{(n)}
$
for all $k=1, \ldots, 2^{n-1}.$
\end{proof}

\begin{lemma}%\label{unbounded}
The algebra isomorphism   $\Phi$ is discontinuous  in the measure topology.
\end{lemma}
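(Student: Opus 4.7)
The plan is to exhibit a sequence $\{x_n\}\subset \mathcal{R}_\infty$ that tends to $0$ in operator norm (hence in the measure topology) while $\Phi(x_n)$ fails to tend to $0$ in the measure topology. The engine of the argument is the observation that $\Phi$ rescales certain off-diagonal matrix units by the explicit factor $2^n$.

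Concretely, from (\ref{inveran}) the element $a_n = \sum_{l=1}^{2^n}\gamma_l^{(n)}e_{l,l}^{(n)}$ is diagonal in the system $\{e_{i,j}^{(n)}\}$, so it acts on matrix units by $a_n e_{i,j}^{(n)} a_n^{-1} = (\gamma_i^{(n)}/\gamma_j^{(n)})\, e_{i,j}^{(n)}$. Choosing $i=2k-1$, $j=2k$ and invoking (\ref{gamma}), the ratio equals $2^n$, so that
\begin{align*}
\Phi(e_{2k-1,2k}^{(n)}) \;=\; 2^n\, e_{2k-1,2k}^{(n)}, \qquad 1\le k\le 2^{n-1}.
\end{align*}
I would then set
\begin{align*}
u_n \;:=\; \sum_{k=1}^{2^{n-1}} e_{2k-1,2k}^{(n)}, \qquad x_n \;:=\; 2^{-n}\, u_n \;\in\; \mathcal{R}_n \subset \mathcal{R}_\infty.
\end{align*}
The matrix unit relations (a)--(b) give $u_n^\ast u_n = \sum_{k=1}^{2^{n-1}} e_{2k,2k}^{(n)}$ and $u_n u_n^\ast = \sum_{k=1}^{2^{n-1}} e_{2k-1,2k-1}^{(n)}$, so $u_n$ is a partial isometry of norm $1$. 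Hence $\|x_n\|=2^{-n}\to 0$, and since norm-convergence in $\mathcal{R}$ forces convergence in the measure topology, $x_n\stackrel{t_\tau}{\to}0$.

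By additivity of $\Phi$ and the scaling identity above,
\begin{align*}
\Phi(x_n) \;=\; 2^{-n}\sum_{k=1}^{2^{n-1}} \Phi(e_{2k-1,2k}^{(n)}) \;=\; 2^{-n}\cdot 2^n\, u_n \;=\; u_n.
\end{align*}
The absolute value $|u_n|$ is the projection $\sum_{k=1}^{2^{n-1}} e_{2k,2k}^{(n)}$ of trace $1/2$. For any projection $e\in\mathcal{R}$ with $\tau(\mathbf{1}-e)<1/2$, the Kaplansky-type inequality $\tau(e\wedge u_n^\ast u_n)\ge \tau(e)+\tau(u_n^\ast u_n)-1>0$ produces a nonzero subprojection $q\le e$ with $q\le u_n^\ast u_n$, which forces $\|u_n e\|\ge \|u_n q\|=1$. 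Consequently $u_n\notin N(1/2,1/2)$ for any $n$, so $\Phi(x_n)\not\to 0$ in the measure topology, and $\Phi$ is discontinuous.

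Nothing in the plan is technically delicate; the only creative step is the choice of the combination $u_n$. One must select a sum of matrix units so that the common outer factor $2^{-n}$ simultaneously (i) controls the norm of $x_n$ and (ii) leaves $\Phi(x_n)=u_n$ with a support projection of fixed positive trace $1/2$. Once that combinatorial choice is made, everything else is a routine manipulation with projections in the type II$_1$ factor $\mathcal{R}$.
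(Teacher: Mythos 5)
Your proof is correct and follows essentially the same route as the paper: your $u_n$ is exactly the paper's partial isometry $v_n$, and the computation $\Phi(2^{-n}u_n)=u_n$ together with the observation that the support of $u_n$ has trace $\tfrac{1}{2}$ is the paper's argument verbatim (the paper merely asserts that such a sequence cannot tend to $0$ in measure, where you spell this out via the parallelogram inequality for projections). One cosmetic point: since the definition of $N(\varepsilon,\delta)$ allows $\tau(\mathbf{1}-e)\le\delta$, your strict inequality $\tau(\mathbf{1}-e)<1/2$ shows $u_n\notin N(1/2,1/4)$ rather than $u_n\notin N(1/2,1/2)$; this suffices and the conclusion stands.
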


\begin{proof}
For each $n\geq 2$ take a partial isometry
\begin{eqnarray*}
v_n &=& \sum\limits_{i=1}^{2^{n-1}}e^{(n)}_{2i-1, 2i}.
\end{eqnarray*}
By \eqref{inveran} we have that $a_n^{-1}=\sum\limits_{j=1}^{2^n}\frac{1}{\gamma_j^{(n)}}e_{j,j}^{(n)}.$ Using the last equality we obtain that
\begin{eqnarray*}
\Phi(v_n) &=& \Phi_n\left(\sum\limits_{i=1}^{2^{n-1}}e^{(n)}_{2i-1, 2i}\right)=
\sum\limits_{k=1}^{2^n}\gamma_k^{(n)}e_{k,k}^{(n)}\sum\limits_{i=1}^{2^{n-1}}e^{(n)}_{2i-1, 2i}\sum\limits_{j=1}^{2^n}\frac{1}{\gamma_j^{(n)}}e_{j,j}^{(n)}\\
&=& \sum\limits_{i=1}^{2^{n-1}}\frac{\gamma_{2i-1}^{(n)}}{\gamma_{2i}^{(n)}}e^{(n)}_{2i-1, 2i}\stackrel{\eqref{gamma}}=
2^n\sum\limits_{i=1}^{2^{n-1}}e^{(n)}_{2i-1, 2i}=2^nv_n.
\end{eqnarray*}
and therefore
\begin{eqnarray}\label{phiv}
\left|\Phi(v_n)\right| &=& 2^n\sum\limits_{i=1}^{2^{n-1}}e^{(n)}_{2i, 2i}.
\end{eqnarray}
Since $||v_n||_{\cM}=1$ for all $n,$ it follows that $\displaystyle 2^{-n}v_n \to 0$ in measure.
But \eqref{phiv} show that the sequence
$\displaystyle \left\{\Phi\left(2^{-n}v_n\right)\right\}$  does not converge to zero in measure, because
\begin{eqnarray*}
\tau\left(l(\Phi(v_n))\right) & = \tau\left(\sum\limits_{i=1}^{2^{n-1}}e^{(n)}_{2i, 2i}\right)=\frac{1}{2}.
\end{eqnarray*}
for all $n\in \mathbb{N}.$  This means that
$\Phi$ is discontinuous  in the measure topology.
\end{proof}

So, we have proved the following result.

\begin{theorem}\label{discon}
The algebra $\mathcal{R}_\infty$  admits an algebra automorphism, which is discontinuous in the measure topology.
\end{theorem}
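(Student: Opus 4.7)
The plan is to assemble a single algebra automorphism $\Phi$ of $\mathcal{R}_\infty=\bigcup_n\mathcal{R}_n$ from compatible inner automorphisms $\Phi_n(x)=a_n x a_n^{-1}$ on each matrix block $\mathcal{R}_n$, choosing $a_n$ so that conjugation dilates a carefully chosen null sequence by an amount that destroys convergence in measure. To guarantee $\Phi_n|_{\mathcal{R}_{n-1}}=\Phi_{n-1}$, I need the ratio $c_n:=a_n a_{n-1}^{-1}$ to lie in the relative commutant of $\mathcal{R}_{n-1}$ inside $\mathcal{R}_n$. Using the refinement rule $e^{(n-1)}_{i,j}=e^{(n)}_{2i-1,2j-1}+e^{(n)}_{2i,2j}$, any diagonal element of $\mathcal{R}_n$ whose eigenvalues depend only on the parity of the index commutes with every such $e^{(n-1)}_{i,j}$. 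That gives me a free parameter, namely the ratio between odd- and even-indexed diagonal entries, which I can tune.

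I would take $c_n$ with eigenvalue $2^n$ on odd-indexed positions and $1$ on even-indexed ones; a straightforward induction then shows that the accumulated $a_n=c_1\cdots c_n$ is positive diagonal with $\gamma^{(n)}_{2k-1}=2^n\gamma^{(n)}_{2k}$. The $\Phi_n$ glue to an algebra automorphism $\Phi$ of $\mathcal{R}_\infty$. For the discontinuity witness, take the partial isometry $v_n=\sum_{i=1}^{2^{n-1}} e^{(n)}_{2i-1,2i}$, of operator norm $1$, whose left support has trace $1/2$. Conjugation by the diagonal $a_n$ scales each summand by $\gamma^{(n)}_{2i-1}/\gamma^{(n)}_{2i}=2^n$, giving $\Phi(v_n)=2^n v_n$. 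Consequently $2^{-n}v_n\to 0$ in operator norm, hence in measure, while $\Phi(2^{-n}v_n)=v_n$ has $\tau(l(v_n))=1/2$ for every $n$, so the image sequence stays bounded away from $0$ in the rank (and measure) topology.

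The only nonroutine step is the commutation identity $[c_n,\mathcal{R}_{n-1}]=0$, which I expect to verify by a direct expansion using properties (a) and (d) of the matrix unit system; the key point is simply that $c_n$ is constant on each parity class and each $e^{(n-1)}_{i,j}$ only couples indices of matching parity. Everything else is bookkeeping: $a_n$ is invertible since it is positive diagonal with nonzero entries, the $\Phi_n$ agree on $\mathcal{R}_{n-1}$ once $c_n$ commutes with it, and the glued map is plainly an automorphism because on each finite stage it is inner. The final theorem is then the conjunction of these three facts: existence of the tower of compatible $\Phi_n$, their limit $\Phi$ as an automorphism of $\mathcal{R}_\infty$, and the blow-up on the sequence $\{2^{-n}v_n\}$.
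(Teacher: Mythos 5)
Your construction coincides with the paper's: the same diagonal elements $c_n$ with eigenvalue $2^n$ on odd positions and $1$ on even positions, the same accumulated $a_n$ with $\gamma^{(n)}_{2k-1}=2^n\gamma^{(n)}_{2k}$, the same gluing of the inner automorphisms $\Phi_n$ via the commutation $[c_n,\mathcal{R}_{n-1}]=0$, and the same witness sequence $2^{-n}v_n$ with $\Phi(2^{-n}v_n)=v_n$ supported on a projection of trace $1/2$. The proposal is correct and follows essentially the same route as the paper.
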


\begin{remark}
 It is clear that Theorem~\ref{ringisomorphism} is an extension of \cite[Theorem 1.4]{AK2020} and they provide the automatic continuity  of ring isomorphisms in the measure topology. Thus the above Theorem~\ref{discon}  shows that the condition $\cM\subset \cA$ is essential in both of these theorems.
\end{remark}

\end{document}